\def\blfootnote{\xdef\@thefnmark{}\@footnotetext}
\newtheorem{theorem}{Theorem}[section]
\newtheorem{lemma}[theorem]{Lemma}
\newtheorem{proposition}[theorem]{Proposition}
\newtheorem{corollary}[theorem]{Corollary}
\newtheorem{hyp}[theorem]{Hypothesis}
\newtheorem{conj}[theorem]{Conjecture}
\theoremstyle{definition}
\newtheorem{remark}[theorem]{Remark}
\newtheorem*{definition*}{Definition}
\newcommand{\g}{\gamma}
\newcommand{\F}{{\Bbb F}}
\newcommand{\bt}{\begin{theorem}}
\newcommand{\et}{\end{theorem}}
\newcommand{\bc}{\begin{corollary}}
\newcommand{\ec}{\end{corollary}}
\newcommand{\bpr}{\begin{proposition}}
\newcommand{\epr}{\end{proposition}}
\newcommand{\be}{\begin{equation}}
\newcommand{\ee}{\end{equation}}
\newcommand{\bp}{\begin{proof}}
\newcommand{\ep}{\end{proof}}
\newcommand{\bconj}{\begin{conj}}
\newcommand{\econj}{\end{conj}}
\newcommand{\bl}{\begin{lemma}}
\newcommand{\el}{\end{lemma}}
\newcommand{\bh}{\begin{hyp}}
\newcommand{\eh}{\end{hyp}}
\newcommand{\br}{\begin{remark}}
\newcommand{\er}{\end{remark}}
\let\leq=\leqslant
\let\geq=\geqslant
\let\leq=\leqslant
\let\geq=\geqslant
\numberwithin{equation}{section}
\newcommand{\ed}{\end{document}}
\begin{document}
\title{Finite groups with Engel sinks of bounded rank}

\author{E. I. Khukhro}
\address{University of Lincoln, U.K., and\newline
Sobolev Institute of Mathematics, Novosibirsk, 630090, Russia}
\email{khukhro@yahoo.co.uk}

\author{P. Shumyatsky}

\address{Department of Mathematics, University of Brasilia, DF~70910-900, Brazil}
\email{pavel@unb.br}

\keywords{Finite groups; Engel condition; nilpotent residual; bounded rank}

\subjclass[2010]{20F45, 20D10, 20D06}

\begin{abstract}
For an element $g$ of a group $G$, an Engel sink is a subset ${\mathscr E}(g)$ such that for every $x\in G$ all sufficiently long commutators $[...[[x,g],g],\dots ,g]$ belong to ${\mathscr E}(g)$. A~finite group is nilpotent if and only if every element has a trivial Engel sink. We prove that if in a finite group $G$ every element has an Engel sink generating a subgroup of rank~$r$, then
$G$ has a normal subgroup $N$ of rank bounded in terms of $r$ such that $G/N$ is nilpotent.
\end{abstract}
\maketitle

\section*{Introduction}\label{s-intro}
\baselineskip18pt

 A~group $G$ is called an \emph{Engel group} if for every $x,g\in G$ the equation $[x,g,g,\dots , g]=1$ holds, where $g$ is repeated in the commutator sufficiently many times depending on $x$ and $g$. (Throughout the paper, we use the left-normed simple commutator notation
$[a_1,a_2,a_3,\dots ,a_r]=[...[[a_1,a_2],a_3],\dots ,a_r]$.)
Of course, any locally nilpotent group is an Engel group. In some classes of groups the converse is also known to be true. For example, a finite Engel group is nilpotent by Zorn's theorem \cite{zorn}. Wilson and Zelmanov~\cite{wi-ze} proved that profinite Engel groups are locally nilpotent, and Medvedev~\cite{med} extended this result to compact (Hausdorff) groups.

As a next step, it is natural to consider groups that are `almost Engel' in the sense of restrictions on so-called Engel sinks. An Engel sink of an element $g\in G$ is a
set ${\mathscr E}(g)$ such that for every $x\in G$ all sufficiently long commutators $[x,g,g,\dots ,g]$ belong to ${\mathscr E}(g)$, that is, for every $x\in G$ there is a positive integer $n(x,g)$ such that
 $$[x,\underbrace{g,g,\dots ,g}_n]\in {\mathscr E}(g)\qquad \text{for all }n\geq n(x,g).
 $$
Engel groups are precisely the groups for which we can choose ${\mathscr E}(g)=\{ 1\}$ for all $g\in G$. In \cite{khu-shu162} we considered finite, profinite, and compact groups in which every element has a finite Engel sink.
 We proved in \cite{khu-shu162} that compact groups with this property are finite-by-(locally nilpotent).

 Results for finite groups have to be of quantitative nature. Obviously, in a finite group every element has the smallest Engel sink, so from now on we use the term Engel sink for the minimal Engel sink of $g$, denoted by ${\mathscr E}(g)$, thus eliminating ambiguity in this notation. We proved in \cite[Theorem~3.1]{khu-shu162} that if $G$ is a finite group and there is a positive integer $m$ such that $|{\mathscr E}(g)|\leq m$ for all $g\in G$, then $G$ has a normal subgroup $N$  of order bounded in terms of $m$ such that the quotient $G/N$ is nilpotent.

In this paper we consider finite groups in which there is a bound for the rank of the subgroups generated by the Engel sinks. Here, the \textit{rank} of a finite group is the minimum number $r$ such that every subgroup can be generated by $r$ elements.

\bt
\label{t-main}
Suppose that $G$ is a finite group such that for every $g\in G$ the Engel sink ${\mathscr E}(g)$ generates a subgroup of rank $r$. Then $G$ has a normal subgroup $N$ of rank bounded in terms of $r$ such that the quotient $G/N$ is nilpotent.
\et

Clearly, the conclusion can also be stated as ``Then the rank of the nilpotent residual $\g _{\infty}(G)$ is bounded in terms of $r$.'' Here, $\g _{\infty}(G)=\bigcap_i\g_i(G)$ is the intersection of all terms of the lower central series.

First we prove the theorem for soluble finite groups. Then we consider the nonsoluble case (where we use the classification of finite simple groups).

\section{Preliminaries}
\label{s-prel}

The following result was obtained by Kov\'acs~\cite{kov} for
soluble groups, and extended independently by Guralnick \cite{gur} and Lucchini \cite{luc} using the classification (improving a bound $2d$ of Longobardi and Maj \cite{lo-ma}).

\begin{lemma}\label{l-kov} If $d$ is the maximum of the ranks of
the Sylow subgroups of a~finite group, then the rank of this
group is at most~$d+1$.
\end{lemma}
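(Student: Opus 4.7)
The plan is to reduce the lemma to the cleaner statement that $d(G)\leq d+1$, where $d(G)$ denotes the minimum number of generators of $G$. Since any subgroup $H\leq G$ has its Sylow subgroups contained in Sylow subgroups of $G$, the hypothesis on Sylow ranks descends to $H$; so once we bound $d(\cdot)\leq d+1$ for every group satisfying the hypothesis, the desired rank bound for $G$ follows by applying the inequality to every subgroup.

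I would proceed by induction on $|G|$. Fix a minimal normal subgroup $N$; by induction $G/N$ is generated by at most $d+1$ elements, which lift to $x_1,\dots,x_{d+1}\in G$ with $\langle x_1,\dots,x_{d+1}\rangle N = G$. The crux is to adjust the $x_i$ by elements of $N$ so that the adjusted tuple already generates all of $G$, not merely $G$ modulo $N$.

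In the soluble case treated by Kov\'acs, the minimal normal subgroup $N$ is elementary abelian of rank at most $d$ (being contained in a Sylow $p$-subgroup of $G$), and it is an irreducible $\F_p[G/N]$-module. A Gasch\"utz-type cohomological argument, exploiting this module structure, shows that lifting $d+1$ generators through $N$ introduces no extra obstruction beyond one controlled by $H^1(G/N,N)$, yielding $d(G)\leq d+1$. In the nonsoluble case treated by Guralnick and Lucchini, $N$ may instead be a direct product $S^k$ of isomorphic nonabelian simple groups; here one invokes the classification of finite simple groups, using that each simple $S$ is $2$-generated together with precise counting estimates (of Aschbacher--Guralnick type) on the number of generators of the direct power $S^k$ relative to the action of $G/N$.

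The main obstacle is precisely this lifting step: the naive estimate $d(G)\leq d(G/N)+d(N)$ only gives $2d+1$, and shaving this down to the sharp bound $d+1$ requires either the delicate module-theoretic cohomology argument in the soluble case or the classification-based counting of generating tuples of simple-group direct powers in the nonsoluble case. By contrast, the reduction to minimal normal subgroups and the inheritance of the hypothesis under passing to subgroups and quotients are routine.
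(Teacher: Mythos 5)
The paper does not actually prove this lemma: it is quoted as a known theorem, due to Kov\'acs in the soluble case and extended independently by Guralnick and Lucchini (using the classification of finite simple groups, improving an earlier bound of roughly $2d$ due to Longobardi and Maj), so there is no internal proof to compare yours against. Your preliminary reduction is correct and is indeed how such results are applied: the hypothesis on Sylow ranks passes to subgroups (a Sylow subgroup of $H\le G$ lies in a Sylow subgroup of $G$, and rank is monotone under passing to subgroups), so a uniform bound $d(\cdot)\le d+1$ on the minimal number of generators over all groups satisfying the hypothesis yields the rank bound for $G$ itself.

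As a proof, however, your proposal has a genuine gap at exactly the point you flag yourself: the entire content of the theorem is the lifting step through a minimal normal subgroup $N$, and you do not carry it out in either case, but only name the tools --- a Gasch\"utz-type cohomological argument for the irreducible module $N$ in the soluble case, and classification-based counting of generating tuples of $S^k$ in the nonsoluble case. The naive estimate $d(G)\le d(G/N)+d(N)$ gives only about $2d$, which is precisely the weaker Longobardi--Maj bound, so everything hinges on the step you leave as a black box; note also that your inductive setup tacitly needs the Sylow-rank hypothesis to pass to the quotient $G/N$, which holds but should be said. Since the paper itself treats the lemma as an external citation, your sketch is an accurate roadmap of the literature, but it is not a self-contained proof of the statement.
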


The following lemma appeared independently and simultaneously in
the papers of Gorchakov~\cite{grc}, Merzlyakov~\cite{me}, and as
``P.\,Hall's lemma" in the paper of Roseblade~\cite{rs}.

\begin{lemma}\label{l-gmh}
 Let $p$ be a~prime number. The rank of a~$p$-group of
automorphisms of an abelian 
finite $p$-group of rank~$r$ is bounded in
terms of~$r$.
\end{lemma}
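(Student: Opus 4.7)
My plan is a two-step reduction: bound the action of $P$ on the Frattini quotient $A/pA$ by Sylow theory in $GL_r(\mathbb{F}_p)$, and then analyze the kernel of that action as a ``congruence'' subgroup.

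Consider the reduction homomorphism $\rho\colon \mathrm{Aut}(A)\to \mathrm{Aut}(A/pA)\cong GL_r(\mathbb{F}_p)$. Since $\rho(P)$ is a $p$-subgroup of $GL_r(\mathbb{F}_p)$, Sylow's theorem embeds it (up to conjugacy) into the upper unitriangular group of order $p^{r(r-1)/2}$, so the rank of $\rho(P)$ is at most $\binom{r}{2}$. Now set $K=\ker\rho$. An automorphism $\phi$ lies in $K$ exactly when $\eta=\phi-1$ maps $A$ into $pA$; iterating, $\eta^k(A)\subseteq p^kA=0$ for $p^k\geq\exp(A)$, so $\eta$ is nilpotent and $1+\eta$ is automatically invertible for every $\eta\in\mathrm{Hom}(A,pA)$. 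This gives a bijection $K\leftrightarrow \mathrm{Hom}(A,pA)$ via $\phi\mapsto \phi-1$, and since $\mathrm{Hom}(A,pA)$ is a finite $p$-group, $|K|$ is a $p$-power; in particular, $K$ is itself a $p$-group.

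Equip $K$ with the filtration $K_i=1+\mathrm{Hom}(A,p^iA)$. Short computations, based on $(1+\eta)(1+\mu)-(1+\mu)(1+\eta)=[\eta,\mu]$ and the binomial expansion of $(1+\eta)^p$, yield $[K_i,K_j]\subseteq K_{i+j}$ and $K_i^p\subseteq K_{i+1}$. A telescoping argument in fact upgrades the second inclusion to $K_i^p=K_{i+1}$ for $p$ odd, whence $[K,K]\subseteq K_2=K^p$ and $K$ is a powerful $p$-group; for $p=2$ the same conclusion holds after replacing $K$ by $K_2$ (of index at most $p^{r^2}$ in $K$). By the standard theory of powerful finite $p$-groups, the rank equals the minimum number of generators, so it suffices to bound $d(K/K_2)$. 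The map $1+\eta\mapsto \bigl(a+pA\mapsto \eta(a)+p^2A\bigr)$ is a well-defined injective homomorphism
\[
K/K_2\hookrightarrow \mathrm{Hom}(A/pA,\,pA/p^2A),
\]
whose target has $\mathbb{F}_p$-dimension at most $r\cdot r=r^2$. Thus the rank of $K$ is bounded in terms of $r$, and combining with the general inequality $\mathrm{rank}(P)\leq\mathrm{rank}(P\cap K)+\mathrm{rank}(P/(P\cap K))$ yields the desired bound.

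The main technical hurdle I anticipate is the passage from $d(K)$ to $\mathrm{rank}(K)$, since subgroups of a $p$-group can a priori require more generators than the group itself; this is exactly where the powerful structure is needed. The prime $p=2$ is a minor but genuine wrinkle, handled by descending to the subgroup $K_2$ and absorbing the bounded-index defect into the final estimate.
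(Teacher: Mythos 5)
The paper does not actually prove this lemma: it is quoted from Gorchakov, Merzlyakov and Roseblade (``P.~Hall's lemma''), so your argument must stand on its own. Its architecture --- bound the image of $P$ in $GL_r(\mathbb{F}_p)$ by the unitriangular group, then control the congruence kernel $K$ via powerful $p$-group theory, and combine with ${\rm rank}(P)\le {\rm rank}(P\cap K)+{\rm rank}(\rho(P))$ --- is sensible, and everything up to and including the inclusions $[K_i,K_j]\subseteq K_{i+j}$ and $K_i^p\subseteq K_{i+1}$ is correct.

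The gap is the asserted equality $K_i^p=K_{i+1}$ (and with it $\Phi(K)=K_2$, hence $d(K)=d(K/K_2)$), which fails when $A$ is not homocyclic. Take $A=\mathbb{Z}/p\oplus\mathbb{Z}/p^{3}$ with generators $a,b$ and $p$ odd. Then $\mathrm{Hom}(A,pA)\cong \mathbb{Z}/p\oplus\mathbb{Z}/p^{2}$ and a direct computation shows $K$ is abelian of type $(p,p^{2})$, so $K^{p}$ has order $p$; but $K_2=1+\mathrm{Hom}(A,p^{2}A)$ has order $p^{2}$. Concretely, the homomorphism $\nu$ with $\nu(a)=p^{2}b$, $\nu(b)=0$ lies in $\mathrm{Hom}(A,p^2A)$ yet cannot equal $p\eta$ for any $\eta\in\mathrm{Hom}(A,pA)$, since such an $\eta$ must send $a$ to an element of order at most $p$, forcing $p\eta(a)=0$; this kills the telescoping at its first step. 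Consequently $d(K/K_2)=1$ while $d(K)=2$, so bounding $d(K/K_2)$ does not bound $d(K)$. Moreover, once $K^p=K_2$ fails, your proof that $K$ is powerful (via $[K,K]\le K_2=K^p$) also collapses, because $[K,K]\le K_2$ by itself says nothing about $K^p$. The lemma is of course true, and your scheme works verbatim for homocyclic $A$; for general $A$ you need either a direct argument that $d(K)$ itself (not merely $d(K/K_2)$) is $r$-bounded --- for instance a filtration adapted to the decomposition of $A$ into homogeneous components, or an embedding of $K/\Phi(K)$ into an $r$-boundedly small group --- or a different route altogether.
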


The next two lemmas must also be well known. For brevity we say that a quantity is \textit{$a$-bounded} if it is bounded above in terms of a parameter $a$.

\begin{lemma}\label{l-r-coprime}
 A~finite $p'$-group $Q$ of linear
transformations of a vector space of dimension $n$ over a field of
characteristic $p$ has $n$-bounded rank.
\end{lemma}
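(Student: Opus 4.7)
The plan is to reduce to the classical Jordan theorem on finite subgroups of $GL_n(\mathbb{C})$. First I would extend scalars to the algebraic closure $\overline K$ of the base field, which changes neither the dimension nor the rank of $Q$. Since $|Q|$ is coprime to $p$, Maschke's theorem gives a decomposition of $V$ into irreducible $Q$-modules, and for each irreducible constituent, Brauer's theorem on lifting representations of $p'$-groups produces an irreducible $\mathbb{C}$-representation of the same dimension and the same kernel. Assembling these lifts yields a faithful $\mathbb{C}$-representation of $Q$ of dimension $n$, and hence an embedding $Q\hookrightarrow GL_n(\mathbb{C})$.

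By Jordan's theorem applied in $GL_n(\mathbb{C})$, $Q$ contains an abelian normal subgroup $A$ with $|Q:A|$ bounded in terms of $n$. Returning to characteristic $p$, the abelian $p'$-group $A$ acts diagonalizably on $V$, and therefore embeds in the diagonal torus $(\overline K^*)^n$. Since every finite subgroup of $\overline K^*$ is cyclic, $A$ sits inside a direct product of $n$ cyclic groups; consequently every subgroup of $A$ can be generated by at most $n$ elements, so the rank of $A$ itself is at most $n$.

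To conclude, $Q/A$ has $n$-bounded order, and hence $n$-bounded rank. For any subgroup $H\leq Q$, the intersection $H\cap A$ has rank at most $n$, while $H/(H\cap A)$ embeds in $Q/A$; lifting generators from each piece shows that $H$ can be generated by at most $n+c(n)$ elements for a function $c$ depending only on $n$. This gives an $n$-bounded bound on the rank of $Q$.

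The main technical point I would have to justify carefully is the lifting step: for a $p'$-group the decomposition matrix is trivial, so each irreducible modular representation corresponds to an ordinary irreducible representation of the same degree, with matching kernels. Once the embedding into characteristic zero is in hand, Jordan's theorem and the diagonalization of $A$ do the rest; there are no serious combinatorial obstacles.
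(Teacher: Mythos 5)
Your proof is correct, but it takes a genuinely different route from the paper. The paper first reduces to the case where $Q$ is a $q$-group for a single prime $q\ne p$ by invoking Lemma~\ref{l-kov} (so that it suffices to bound the ranks of the Sylow subgroups), then takes a \emph{maximal} abelian normal subgroup $A\le Q$, observes that $Q/A$ acts faithfully on $A$ (since $C_Q(A)=A$ in a nilpotent group), bounds ${\bf r}(A)$ by $n$ via diagonalization over the extended field, and finishes with Lemma~\ref{l-gmh}. You instead stay with the full $p'$-group $Q$, lift it to characteristic zero (using that the decomposition matrix of a $p'$-group is trivial and that kernels are read off from Brauer characters, or equivalently via Teichm\"uller/Witt-vector lifting of $p'$-subgroups of $GL_n(\overline{\mathbb F_p})$), and apply Jordan's theorem to produce an abelian normal subgroup of $n$-bounded index; the common core of both arguments is that an abelian $p'$-group in dimension $n$ diagonalizes and hence has rank at most $n$ because finite multiplicative subgroups of fields are cyclic. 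What each approach buys: yours avoids Lemma~\ref{l-kov}, whose general form rests on the classification, at the price of importing Jordan's theorem and the modular-to-ordinary lifting (the one step you rightly flag as needing care, and which is indeed standard); the paper's argument never leaves characteristic $p$ and uses only the two quoted rank lemmas, which it needs elsewhere anyway. Your final bound $n+\log_2 J(n)$, with $J(n)$ the Jordan constant, is explicit but typically worse than what the paper's route gives; both are $n$-bounded, which is all that is required.
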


\begin{proof} By Lemma~\ref{l-kov} we can assume that $Q$ is a $q$-group for a prime $q$. Choose a maximal abelian normal subgroup $A$ in $Q$. Since $Q/A$ acts faithfully on $A$, by Lemma~\ref{l-gmh} it
suffices to bound the rank of~$A$. After extension of the field,
$A$ is diagonalizable, which gives the result since finite
multiplicative subgroups of fields are cyclic. \end{proof}

Let $F(G)$ denote the Fitting subgroup of a group $G$, the largest normal nilpotent subgroup. The Fitting series starts with $F_1(G)=F(G)$, and then by induction $F_{i+1}(G)$ is the inverse image of $F(G/F_i(G))$. The Fitting height of a soluble finite group $G$ is the minimum $h$ such that $F_h(G)=G$. We also use the usual notation $O_{p'}(G)$ for the largest normal $p'$-subgroup of $G$, and $O_{p',p}(G)$ for the inverse image of the largest normal $p$-subgroup of $G/O_{p'}(G)$.

\begin{lemma}\label{l-r-fh}
 A~finite soluble group $G$ of rank $r$
has $r$-bounded Fitting height.
 \end{lemma}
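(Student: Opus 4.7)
The plan is a two-step reduction: first use the Frattini quotient of the Fitting subgroup to absorb two Fitting layers, then invoke a classical bound on the Fitting height of soluble linear groups.

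Let $F = F(G)$ and $V = F/\Phi(F) = \bigoplus_p V_p$, where $V_p = O_p(F)/\Phi(O_p(F))$ is elementary abelian of $\F_p$-dimension $n_p \le r$. Put $K = C_G(V)$. I would first show $K \le F_2(G)$. Since $F/\Phi(F)$ is abelian we have $[F,F] \le \Phi(F)$, so $F$ acts trivially on $V$ and $F \le K$. Solubility of $G$ gives $C_G(F) = Z(F) \le F$, so $K/Z(F)$ embeds into $\mathrm{Aut}(F) = \prod_p \mathrm{Aut}(O_p(F))$ with image in the kernel of the induced map to $\prod_p \mathrm{GL}(V_p)$. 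By Burnside's basis theorem the $p$-th factor of this kernel is a $p$-group, so $K/Z(F)$ sits inside a direct product of $p$-groups over distinct primes and is therefore nilpotent. Thus $K$ is nilpotent-by-nilpotent, hence $K \le F_2(G)$.

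Next, $G/K$ embeds into $\prod_p \mathrm{GL}(V_p) = \prod_p \mathrm{GL}_{n_p}(\F_p)$ with $n_p \le r$. Let $H_p$ denote the projection of $G/K$ onto $\mathrm{GL}(V_p)$, so that $G/K \hookrightarrow \prod_p H_p$. A short computation using $F(\prod_i A_i) = \prod_i F(A_i)$ shows that $h(\prod_p H_p) = \max_p h(H_p)$, and since Fitting height is monotone under subgroups, $h(G/K) \le \max_p h(H_p)$.

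The main remaining task, and the main obstacle, is to bound the Fitting height of each $H_p$, a soluble subgroup of $\mathrm{GL}_n(\F_p)$ with $n \le r$. I would invoke the classical theorem (Mal'cev--Zassenhaus, or Huppert) that a soluble linear group of degree $n$ over any field has Fitting height bounded in terms of $n$ alone: such a group has a normal triangularizable subgroup of index bounded in $n$ (whose Fitting height is at most two), and the bounded-order quotient contributes a bounded number of further Fitting layers. An alternative more in the spirit of the paper would be to bound the $p$-length of $H_p$ by Hall--Higman (using the faithful action on a module of dimension $\le r$) and then handle the $p'$-part via Lemma~\ref{l-r-coprime} together with Clifford theory on completely reducible soluble linear groups. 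Combining with the first reduction, $h(G) \le 2 + f(r)$ for some function $f$ depending only on $r$.
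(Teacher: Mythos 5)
Your proof is correct and follows essentially the same route as the paper's: both reduce to a faithful soluble linear action of bounded degree on Frattini quotients of the relevant $p$-groups and then invoke the Mal'cev--Zassenhaus theorem on soluble linear groups. The paper phrases the reduction via $F(G)=\bigcap_p O_{p',p}(G)$, bounding the derived length of $G/F(G)$ (which dominates the Fitting height), whereas you absorb two Fitting layers into $C_G(F/\Phi(F))$; the difference is only bookkeeping.
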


\begin{proof} For every prime $p$ the quotient $G/O_{p',p}(G)$ acts faithfully
on the Frattini quotient of $O_{p',p}(G)/O_{p'}(G)$ and therefore
is a linear group of dimension at most~$r$. By 
Zassenhaus' theorem \cite[15.1.3]{rob}, the derived length of $G/O_{p',p}(G)$ is
 $r$-bounded . Hence the same is true
for $G/F(G)=G/\bigcap _{p}O_{p',p}(G)$. \end{proof}

The next technical lemma is also a well-known fact (see, for example, \cite[Lemma~10]{khu-maz}).

\bl \label{l-f2}
 For any finite group $H=F_2(H)$ of Fitting
height~$2$ we have
$$\gamma _{\infty}(H)=\prod
_{q}[F_q,\,H_{q'}],$$
where $F_q$ is the Sylow $q$-sub\-group of~$F(H)$ and $H_{q'}$ is a
Hall $q'$-sub\-group of~$H$.
\el

The following elementary lemma will be used several times.

\begin{lemma}\label{l-prod} Suppose that a group $A$ acts by automorphisms on a group $G$. If $A=\langle a_1,\dots ,a _k\rangle$, then
$[G,A]=[G,a_1]\cdots [G,a _k]$.
\end{lemma}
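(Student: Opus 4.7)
I would proceed by induction on $k$.

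For the base case $k=1$, the claim is $[G,\langle a\rangle]=[G,a]$. The subgroup $[G,a]$ is $\langle a\rangle$-invariant, since $[g,a]^a=[g^a,a]\in [G,a]$; an induction on $n$ based on the identity
\[
 [g,a^{n+1}]=[g,a]\cdot[g,a^n]^a
\]
then shows that $[g,a^n]\in [G,a]$ for every~$n$, so $[G,\langle a\rangle]=[G,a]$.

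For the inductive step, set $B=\langle a_1,\dots,a_{k-1}\rangle$, so that by induction $[G,B]=[G,a_1]\cdots[G,a_{k-1}]$; it is then enough to establish the two-generator form $[G,\langle B,a_k\rangle]=[G,B]\cdot[G,a_k]$. Every $a\in A$ is a word in the symbols of $B$ and of $\langle a_k\rangle$, and repeated application of the standard identity $[g,xy]=[g,y]\cdot[g,x]^y$ decomposes $[g,a]$ into a product of conjugates of commutators of the form $[g',b]$ with $b\in B$ and $[g',a_k^m]$. This exhibits every element of $[G,A]$ inside the normal closure of $[G,B]\cup[G,a_k]$.

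The main technical point, and the principal obstacle, is to verify that the set product $M_k:=[G,a_1]\cdots[G,a_k]$ is in fact a subgroup and is $A$-invariant; only then can the conjugates produced by the commutator identity be absorbed back into the product. I would handle this in parallel with the outer induction: assuming both properties hold for $M_{k-1}$, I apply the identity $[g,xy]=[g,y]\cdot[g,x]^y$ once more to rewrite each $[G,a_k]^{a_i}$ and each $[G,a_i]^{a_k}$ as a product that lies in~$M_k$, thereby establishing simultaneously the closure under multiplication and the $A$-invariance of~$M_k$. Once $M_k$ is known to be an $A$-invariant subgroup containing every $[G,a_i]$, the reverse inclusion $[G,A]\subseteq M_k$ is automatic, since $A$ centralises the quotient $G/M_k$.
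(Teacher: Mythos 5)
Your base case and the overall strategy of reducing everything to the claim that $M_k=[G,a_1]\cdots[G,a_k]$ is an $A$-invariant subgroup are fine, but the step you yourself single out as the principal obstacle is not actually resolved by the mechanism you propose, and this is where the argument breaks. Two different things need checking, and your identity addresses neither. First, closure of the set product under multiplication is a statement about the subgroups $[G,a_i]$ permuting inside $G$, i.e.\ about conjugation by elements of $G$; the conjugates $[G,a_i]^{a_k}$ you propose to compute are conjugates by the \emph{automorphisms}, which is irrelevant to this point. What does the job is the standard observation that each $[G,a]$ is a normal subgroup of $G$ (from $[g,a]^h=[gh,a]\,[h,a]^{-1}$), so that $M_k$ is a product of normal subgroups of $G$ and hence itself a normal subgroup. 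Second, for $A$-invariance your computation runs into a circle: the identity $[g,a_ia_k]=[g,a_k]\,[g,a_i]^{a_k}$ expresses $[g,a_i]^{a_k}$ as $[g,a_k]^{-1}[g,a_ia_k]$, where $[g,a_ia_k]$ is a commutator with a non-generator of $A$, and expanding that term by the same identity $[g,xy]=[g,y][g,x]^y$ merely regenerates $[g,a_i]^{a_k}$; the rewriting never terminates inside $M_k$. The identity you need instead is the trivial $m^{a_i}=m\,[m,a_i]$: once $M_k$ is known to be a subgroup containing every $[G,a_j]$, the inclusions $[M_k,a_i]\leq[G,a_i]\leq M_k$ give $M_k^{a_i}\subseteq M_k$ immediately.

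This is exactly how the paper argues, and with these two observations the entire induction on $k$ becomes unnecessary: each generator $a_i$ acts trivially on $G/M_k$, hence so does all of $A$, which yields $[G,A]\leq M_k$ in one line, the reverse inclusion being obvious. As written, the closure and invariance of $M_k$ --- the actual content of the lemma --- rest on a calculation that cannot be completed, so the proposal has a genuine gap rather than being a mere variant of the paper's proof.
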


\begin{proof}
The product $[G,a_1]\cdots [G,a _k]$ is a normal subgroup of $G$. This product is also $A$-invariant,
since it is invariant under every generator $a_i$ of $A$:
$$
\big[[G,a_1]\cdots [G,a _k],\,a_i\big]\leq [G,a_i]\leq [G,a_1]\cdots [G,a _k].
$$
Furthermore, $A$ acts trivially on the quotient by this product, since so does every generator of $A$. Hence, $[G,A]\leq [G,a_1]\cdots [G,a _k]$. The reverse inclusion is obvious.
\ep

The following lemma relates Engel sinks in finite groups to coprime actions.
We denote the derived subgroup of a group $X$ by $X'$.

\begin{lemma}\label{l0}
Let $P$ be a finite $p$-subgroup of a group $G$, and $g\in G$ a $p'$-element normalizing $P$. Then $[P,g]\leq \langle {\mathscr E}(g)\rangle$.
\end{lemma}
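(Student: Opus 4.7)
My plan is to combine the coprime action identity $[P,g]=[P,g,g]$ with the defining property of the Engel sink, exploiting finiteness of $P$ to make the choice of commutator length uniform.

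First, since $g$ is a $p'$-element normalizing the finite $p$-group $P$, the cyclic group $\langle g\rangle$ acts coprimely on $P$. The standard coprime action identity then gives $[P,g]=[P,g,g]$, and by induction
\[
[P,\underbrace{g,g,\dots ,g}_{n}]\;=\;[P,g]\qquad\text{for every }n\geq 1.
\]
In particular, for each $n\geq 1$ the subgroup $[P,g]$ is generated by the elements $[x,\underbrace{g,\dots,g}_{n}]$ as $x$ ranges over $P$.

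Next I would invoke the Engel sink definition: for every $x\in P$ there is an integer $n(x,g)$ such that $[x,\underbrace{g,\dots,g}_{n}]\in\mathscr{E}(g)$ for all $n\geq n(x,g)$. Because $P$ is \emph{finite}, the integer $N=\max_{x\in P}n(x,g)$ exists, and for every single $x\in P$ we have $[x,\underbrace{g,\dots,g}_{N}]\in\mathscr{E}(g)$ simultaneously. Combining this with the previous step,
\[
[P,g]\;=\;[P,\underbrace{g,\dots,g}_{N}]\;=\;\bigl\langle [x,\underbrace{g,\dots,g}_{N}]:x\in P\bigr\rangle\;\leq\;\langle\mathscr{E}(g)\rangle,
\]
which is exactly the conclusion.

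There is essentially no obstacle: the only point to be careful about is that a priori the Engel sink property yields commutator membership in $\mathscr{E}(g)$ only after a length depending on $x$, whereas to assemble these into a generating set of $[P,g]$ I need a single length that works for every $x\in P$. Finiteness of $P$ handles this at once, and the coprime action identity guarantees that no matter how large one takes this uniform length $N$, the resulting commutators still generate all of $[P,g]$.
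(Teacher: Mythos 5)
Your overall strategy is the same as the paper's---use the coprime action to show that arbitrarily long iterated commutators still account for all of $[P,g]$, and then make the commutator length uniform by finiteness of $P$---but there is a genuine gap at the step ``In particular, for each $n\geq 1$ the subgroup $[P,g]$ is generated by the elements $[x,\underbrace{g,\dots ,g}_{n}]$ as $x$ ranges over $P$.'' The identity $[P,g]=[P,g,g]$ concerns iterated \emph{subgroup} commutators: $[P,g,g]=[[P,g],g]$ is generated by the elements $[y,g]$ where $y$ ranges over the whole subgroup $[P,g]$, that is, over arbitrary products of the generators $[x,g]$ and their inverses, not merely over the set $\{[x,g]\mid x\in P\}$. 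So the equality of subgroups $[P,\underbrace{g,\dots ,g}_{n}]=[P,g]$ does not by itself imply that the set of left-normed commutators $[x,\underbrace{g,\dots ,g}_{n}]$ with $x\in P$ generates $[P,g]$; in general, for a subgroup $H=\langle S\rangle$, the commutators $[s,g]$ with $s\in S$ generate $[H,g]$ only as a normal subgroup, and the set $\{[h,g]\mid h\in H\}$ need not exhaust $[H,g]$ when $H$ is nonabelian (e.g.\ when $C_H(g)\ne 1$ inside $H'$).

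The claim you need is in fact true, but establishing it is precisely the content of the paper's proof, which you have skipped. The paper passes to the abelian quotient $V=[P,g]/[P,g]'$, where coprimality gives $V=[V,g]$ and $C_V(g)=1$, so that $v\mapsto [v,g]$ is a bijection of $V$; hence every element of $V$ is literally an $n$-fold commutator $[v,\underbrace{g,\dots ,g}_{n}]$ for every $n$. Taking $n=N$ uniform, as you did, every element of $V$ is then the image of an element of ${\mathscr E}(g)\cap [P,g]$, and since $[P,g]'\leq \Phi([P,g])$ for the $p$-group $[P,g]$, a subset covering $[P,g]$ modulo $[P,g]'$ generates $[P,g]$. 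Without some such reduction (abelianization plus the Frattini argument, or an equivalent induction), your ``In particular'' asserts the lemma's essential difficulty rather than proving it.
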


\begin{proof}
For the abelian $p$-group $V=[P,g]/[P,g]'$ we have $V=[V,g]$ and $C_V(g)=1$ because the action of $g$ on $V$ is coprime. Then $V=\{[v,g]\mid v\in V\}$ and therefore also
$$
V=\{[v,\underbrace{g,\dots ,g}_n\,]\mid v\in V\}
$$
for any $n$. Hence, $V$ is contained in the image of ${\mathscr E}(g)\cap [P,g]$ in $[P,g]/[P,g]'$, whence the result.
\ep

\section{Soluble groups}\label{s-sol}

Throughout what follows, let ${\bf r}(X)$ denote the rank of a finite group~$X$.

\bpr
\label{pr1}
Let $q$ be a prime, let $Q$ be a finite $q$-group, and $U$ a $q'$-group of
automorphisms of $Q$. Suppose that ${\bf r}([Q,u])\leq r$ for every $u\in U$. Then
 \begin{itemize}
 \item[\rm (a)] ${\bf r}(U)$ is $r$-bounded;
 \item[\rm (b)] ${\bf r}([Q,U])$ is $r$-bounded;
 \item[\rm (c)] if $U$ is soluble, then the derived length of $U$ is $r$-bounded.
 \end{itemize}

\epr

\begin{proof}
(a) First suppose that $U$ is abelian. We consider the Frattini quotient $V=Q/\Phi (Q)$ as a faithful ${\mathbb F} _qU$-module. Pick $u_1\in U$ such that $[V,u_1]\ne 0$. By Maschke's theorem, $V= [V,u_1]\oplus C_V(u_1)$, where both summands are $U$-invariant, since $U$ is abelian. If $C_U([V,u_1])=1$, then ${\bf r}(U)$ is $r$-bounded by Lemma~\ref{l-r-coprime}. Otherwise pick $1\ne u_2\in C_U([V,u_1])$; then $V= [V,u_1] \oplus [V,u_2] \oplus C_V(\langle u_1,u_2\rangle )$. If $1\ne u_3\in C_U([V,u_1]\oplus [V,u_2])$, then $V= [V,u_1]\oplus [V,u_2]\oplus [V,u_3] \oplus C_V(\langle u_1,u_2,u_3\rangle )$, and so on. If $C_U([V,u_1]\oplus\dots \oplus [V,u_k])=1$ at some step $k\leq r$, then again ${\bf r}(U)$ is $r$-bounded by Lemma~\ref{l-r-coprime}. However, if there are too many steps, say, $k$ steps for $k>r$, then for the element $w=u_1u_2\cdots u_k$ we shall have $0\ne [V,u_i]= [[V,u_i],w]$, so that $[V,w] = [V,u_1]\oplus\dots \oplus [V,u_k]$ will have rank greater than $r$, a contradiction.

We now consider the general case. Let $P$ be a Sylow $p$-subgroup of $U$, and $M$ a maximal normal abelian subgroup of $P$. By the above, ${\bf r}(M)$ is $r$-bounded. Then ${\bf r}(P)$ is $r$-bounded by Lemma~\ref{l-gmh}, since $P/M$ acts faithfully on $M$. Thus, the rank of a Sylow $p$-subgroup of $U$ is $r$-bounded for every $p$, which implies that the rank of $U$ is $r$-bounded by Lemma~\ref{l-kov}.

(b) By part (a), in particular, $U=\langle a_1,\dots ,a_f\rangle$ for some $r$-bounded $f$. Then $[Q,U]=[Q,a_1]\cdots [Q,a_f]$ by Lemma~\ref{l-prod}. Since each of the normal subgroups $[Q,a_i]$ has rank at most $r$ and $f$ is $r$-bounded, the rank of $[Q,U]$ is also $r$-bounded.

(c) The group $U$ acts faithfully on the Frattini quotient $[Q,U]/\Phi ([Q,U])$, which can be regarded as a vector space over $\F _q$, the dimension of which is $r$-bounded by part (b). If $U$ is soluble, then its derived length is $r$-bounded by Zassenhaus' theorem \cite[15.1.3]{rob}.
\ep

\bt\label{t-sol}
Suppose that $G$ is a finite soluble group such that for every $g\in G$ the Engel sink ${\mathscr E}(g)$ generates a subgroup of rank $r$. Then ${\bf r}(\g _{\infty}(G))$ is $r$-bounded.
\et

\bp
Note that the hypothesis is inherited by all sections. By Lemma~\ref{l0}, the hypothesis implies that for any $p$-subgroup $P$ of any section of $G$ and a $p'$-element $g$ of this section normalizing $P$, the rank of $[P,g]$ is at most $r$.

First we prove that the Fitting height $h(G)$ is $r$-bounded.
Since $F(G)=\bigcap_{p}O_{p',p}(G)$, it is sufficient to bound the Fitting height of each quotient $\bar G=G/O_{p',p}(G)$. Since a Hall $p'$-subgroup of $\bar G$ acts faithfully on the Frattini quotient of $O_{p',p}(G)/O_{p'}(G)$, the rank of a Hall $p'$-subgroup of $\bar G$ is $r$-bounded by Proposition~\ref{pr1}(a). Then the Fitting height of a Hall $p'$-subgroup of $\bar G$ is also $r$-bounded by Lemma~\ref{l-r-fh}. Therefore, in order to bound the Fitting height $h(\bar G)$, it remains to bound the $p$-length of $\bar G$.

Let $P$ be a Sylow $p$-subgroup of $\bar G$. Then $P$ acts faithfully on $F(\bar G)=Q_1\times\cdots \times Q_s$, where $Q_i$ is a $q_i$-subgroup and $q_i\ne p$ for all $i$. For every $i$, the derived length of $P/C_P(Q_i)$ is $r$-bounded by Proposition~\ref{pr1}(c). Since $\bigcap _iC_P(Q_i)=1$, the derived length of $P$ is $r$-bounded, and therefore the $p$-length of $\bar G$ is $r$-bounded by the theorems of Hall and Higman \cite[Theorem~A]{ha-hi} for $p\ne 2$, and of Berger and Gross \cite{ber-gro} and
Bryukhanova~\cite{bry81} for $p=2$

Thus, the Fitting height $h(G)$ is $r$-bounded. We now use induction on $h(G)$  to prove that ${\bf r}(\g _{\infty}(G))$ is $r$-bounded. Clearly, we only need to consider the case $G=F_2(G)$. Then by Lemma~\ref{l-f2} we have $\g _{\infty}(G)=\prod _q[F_q,H_{q'}]$, where $F_q$ is a Sylow $q$-subgroup of $F(G)$ and $H_{q'}$ is a Hall $q'$-subgroup of $G/C_G(F_q)$. The rank of each subgroup $[F_q,H_{q'}]$ is $r$-bounded by Proposition~\ref{pr1}(b). Therefore the rank of $\g _{\infty}(G)$ is also $r$-bounded.
\ep

\section{Nonsoluble groups}\label{s-nons}

In the interests of brevity, with a slight abuse of terminology, we say that in a finite group $G$ {\it all Engel sinks are of rank $r$} if for every $g\in G$ the Engel sink ${\mathscr E}(g)$ generates a subgroup of rank at most $r$.

\bl
 \label{l-simple}
 Let $G$ be a finite nonabelian simple group with all Engel sinks of rank~$r$. Then $G$ has $r$-bounded rank.
\el

\bp We can assume that $G$ is either an alternating group or a group of Lie type. For an alternating group $G=A_n$ it is easy to see that $n$ is $r$-bounded. Indeed, every finite group of order $m$ can be embedded in $A_n$ with $m$-bounded $n$, and there are groups of $r$-bounded order with ${\bf r}(\langle{\mathscr E}(g)\rangle)=r+1$ for some element $g$. (For example, in the semidirect product $A\langle b\rangle$ of an elementary abelian $3$-group $A$ of rank $r+1$ and its group of automorphisms $\langle b\rangle$ of order $2$ that acts fixed-point-freely, we have ${\bf r}(\langle{\mathscr E}(b)\rangle)=r+1$, and $|A\langle b\rangle|=2\cdot 3^{r+1}$.)

So let $G$ be a finite simple group of Lie type $G=L_n(\F _{p^k})$ of degree $n$ over a field of order $p^k$, where $p$ is a prime. It is sufficient to show that both $n$ and $k$ are $r$-bounded. Indeed, then the linear covering group $\hat G=\hat L_n(\F _{p^k})$ in its natural representation of dimension $n$ over $\F _{p^k}$ can be regarded as a linear group of $r$-bounded dimension $nk$ over $\F _p$. Therefore the rank of a Sylow $p$-subgroup of $\hat G$ is $r$-bounded by Lemma~\ref{l-gmh}, and the rank of a Sylow $t$-subgroup for $t\ne p$ is $r$-bounded by Lemma~\ref{l-r-coprime}. Then the rank of $\hat G$ is bounded by Lemma~\ref{l-kov}.

For $G=L_n(\F _{p^k})$, to obtain a bound for $k$ in terms of $r$, it is sufficient to show that $G$ has an element $g$ such that ${\mathscr E}(g)$ contains a subgroup isomorphic to the additive group of the field $\F _{p^k}$. This follows from the well-known facts about simple groups of Lie type. To be specific, one of the ways to show this is to use the fact that $G=L_n(\F _{p^k})$ either contains a subgroup isomorphic to $SL_2(p^k)$ or $PSL_2(p^k)$, or is a Suzuki group (over the field $\F _{p^k}$). For example, even stronger statements are proved in \cite{Liebeck--Nikolov--Shalev}. In $SL_2(p^k)$, put
$$g=\begin{pmatrix} \zeta ^{-1}&0\\0&\zeta \end{pmatrix},$$
where $\zeta $ is a nontrivial $p'$-element of the multiplicative group of the field $\F _{p^k}$ such that $\zeta ^2\ne 1$ (the latter condition can always be satisfied for $k>1$). This element normalizes and acts fixed-point-freely on the abelian $p$-subgroup of upper-triangular matrices $$T=\left\{\left.\begin{pmatrix} 1&a\\0&1\end{pmatrix}\right| a\in \F _{p^k}\right\},$$
which is isomorphic to the additive group of $\F _{p^k}$. Then $T\subseteq {\mathscr E}(g)$ by Lemma~\ref{l0}. In the quotient $PSL_2(p^k)$ of $SL_2(p^k)$ by the centre, the image of $T$ is isomorphic to $T$. Finally, the case of $G$ being a Suzuki group is dealt with in similar fashion, by considering the action of a diagonal $2'$-element on a Sylow $2$-subgroup. Thus, $k$ is $r$-bounded.

For $G=L_n(\F _{p^k})$, to obtain a bound for $n$, it suffices to consider the Weyl subgroup, which, for large $n$, contains a subgroup isomorphic to a symmetric group of large degree, which in turn contains Engel sinks of large rank, as explained at the beginning of the proof.

 As explained above, bounds in terms of $r$ for $n$ and $k$ imply that the rank of $G=L_n(\F _{p^k})$ is $r$-bounded.
\ep

\bl \label{l-gen}
Given a prime $p$, any non-abelian finite simple group $T$ of rank~$s$ can be generated by $s$-boundedly many $p'$-elements.
\el

\bp
If $p\ne 2$, then we can use Guralnick's result \cite[Theorem~A]{gur2} that $T$ is generated by an involution and a Sylow $2$-subgroup, which has rank at most $s$ by hypothesis. If $p=2$, we can use King's result \cite{king} that $T=\langle i,a\rangle$, where $|i|=2$ and $|a|$ is an odd prime; then $T=\langle a, a^i\rangle$, since this is an $a$-invariant and $i$-invariant subgroup, which is therefore normal.
\ep

\bpr \label{p-fs}
Let $G$ be a finite group such that $G=[G,G]$ and $G/F(G)$ is a non-abelian simple group. Suppose that all Engel sinks of $G$ have rank $r$. Then $G$ has $r$-bounded rank.
\epr

\bp
By Lemma~\ref{l-simple} the quotient $G/F(G)$ has $r$-bounded rank.
Thus, we need to show that the rank of $F(G)$ is $r$-bounded. It suffices to show that the rank of each Sylow $p$-subgroup of $F(G)$ is $r$-bounded. Considering the quotient of $G$ by the Hall $p'$-subgroup of $F(G)$, we can assume that $F(G)$ is a $p$-group.

Using Lemma~\ref{l-gen} we write $G/F(G)=\langle\bar a_1,\dots , \bar a_k\rangle$, where all the $\bar a_i$ are $p'$-elements, while $k$ is $r$-bounded.
We can choose some $p'$-elements $a_1,\dots ,a_k$ that are preimages of $\bar a_1, \dots , \bar a_k$ in $G$. Let  $S=\langle a_1,\dots ,a_k\rangle$; then $G=F(G)S$.

We have 
$$
[F(G),S]=[F(G),a_1]\cdots [F(G),a_k]
$$
by Lemma~\ref{l-prod}. The rank of each of the normal subgroups $[F(G),a_1]$ is at most $r$ by Lemma~\ref{l0}. Since $k$ is $r$-bounded, the rank of $[F(G),S]$ is $r$-bounded.

Therefore, factoring out $[F(G),S]$, we can assume that $[F(G),S]=1$. Then $S$ is a normal subgroup of $G$. Since $G=G'$ and $F(G)$ is nilpotent, it follows that  $G=S$. 

Under our assumption $[F(G),S]=1$, the group $G=S$ is a perfect group that is a central extension of the finite simple group $G/F(G)$. Hence $F(G)$ is isomorphic to a subgroup of the Schur multiplier of the simple group $G/F(G)$. Therefore $F(G)$ has rank at most $3$, as follows from the classification.
\ep

We are now ready to prove the main result in the general case.

\bt
If $G$ is a finite group with all Engel sinks of rank $r$, then $\g _{\infty} (G)$ has $r$-bounded rank.
\et

\bp
By Theorem~\ref{t-sol} applied to the soluble radical $R(G)$, the rank of $\g _{\infty}(R(G))$ is $r$-bounded. Factoring out $\g _{\infty}(R(G))$ we can assume that the soluble radical is nilpotent: $R(G)=F(G)$. Let the socle of $G/F(G)$ be $S_1\times \cdots \times S_m$, where the $S_i$ are non-abelian simple groups. We claim that the number of factors $m$ is at most $r$. Indeed, by the Feit--Thompson theorem and Frobenius' theorem, each $S_i$ has a nontrivial $2$-subgroup $T_i$ normalized but not centralized by a $2'$-element $a_i\in S_i$. Then $\langle {\mathscr E}(a_1a_2\cdots a_m)\rangle$ has rank at least $m$, so $m\leq r$.

Let $\bar G=G/F(G)$ and let $N=\bigcap N_{\bar G}(S_i)$. Since $m\leq r$ and the quotient $\bar G/N$ acts faithfully by permutations of the factors $S_1,\dots, S_m$, the order $|\bar G/N|$ is $r$-bounded. By the usual argument, since $\bigcap C_{\bar G}(S_i)=1$, the group $N$ embeds in the direct product of $m$ almost simple groups that are extensions of the $S_i$ by outer automorphisms. By Lemma~\ref{l-simple}, each $S_i$ has $r$-bounded rank. It follows from the classification that the order of the outer automorphism group of every $S_i$ is $r$-bounded. Indeed, alternating groups of $r$-bounded rank have $r$-bounded order. For a group of Lie type $L_n(p^f)$, the order $|{\rm Out}\,L_n(p^f)|$ is bounded in terms of $n$ and $f$, and both these parameters are $r$-bounded, as we saw in Lemma~\ref{l-simple}. Hence $|N/(S_1\times\dots\times S_m)|$ is $r$-bounded.

As a result, the composition length of $G/F(G)$ is $r$-bounded. We now complete the proof by induction on this composition length. Let $G_1$ be a normal subgroup of $G$ containing $F(G)$ with $G/G_1$ simple (abelian or non-abelian). By the induction hypothesis, $\g _{\infty }(G_1)$ has $r$-bounded rank. By passing to $G/\g _{\infty }(G_1)$ we can assume that
$G/F(G)$ is simple. If it is abelian, then Theorem~\ref{t-sol} applies to $G$. If it is non-abelian, then Proposition~\ref{p-fs} applies to $\g _{\infty }(G)$.
\ep

\section*{Acknowledgements}
The first
author was supported by the Russian Science Foundation, project no. 14-21-00065,
and the second
by FAPDF and CNPq-Brazil. The first author thanks the University of Brasilia for the hospitality that he enjoyed during his visit to Brasilia.

\end{document}